\documentclass[3p]{elsarticle}

\usepackage{mathrsfs}
\usepackage{amsfonts}
\usepackage{amsmath}
\usepackage{amsthm}
\usepackage[usenames]{color}
\usepackage{footnote}
\usepackage{longtable}
\usepackage{hyperref}
\usepackage{float}
\usepackage{graphicx}
\usepackage{amssymb}
\usepackage{multicol}
\usepackage{subfig}
\usepackage{caption}

\usepackage{algorithm}
\usepackage{algorithmicx}
\usepackage{algpseudocode}
\floatname{algorithm}{Algorithm}

\algnewcommand\algorithmicswitch{\textbf{switch}}  
\algnewcommand\algorithmiccase{\textbf{case}}  
\algnewcommand\algorithmicassert{\texttt{assert}}  
\algnewcommand\Assert[1]{\State \algorithmicassert(#1)}%
\algdef{SE}[SWITCH]{Switch}{EndSwitch}[1]{\algorithmicswitch\ #1\ \algorithmicdo}{\algorithmicend\ \algorithmicswitch}%
\algdef{SE}[CASE]{Case}{EndCase}[1]{\algorithmiccase\ #1}{\algorithmicend\ \algorithmiccase}%
\algtext*{EndSwitch}%
\algtext*{EndCase}%

\newtheorem{theorem}{Theorem}
\newtheorem{corollary}{Corollary}

\newtheorem{lemma}{Lemma}
\newtheorem{example}{Example}

\journal{XXX}









\bibliographystyle{elsarticle-num}

\begin{document}

\begin{frontmatter}

\title{Further Results on Pure Summing Registers and Complementary Ones}



\author[]{Jianrui Xie\corref{mycorrespondingauthor}}
\cortext[mycorrespondingauthor]{Corresponding author}
\ead{jrxie93@stu.xidian.edu.cn}

\address[mymainaddress]{ISN Laboratory, Xidian University, Xi'an 710071, China}

\begin{abstract}
We decide completely the cycle structure of pure summing register (PSR) and complementary summing register (CSR). Based on the state diagram of CSR, we derive an algorithm to generate de Bruijn cycles from CSR inspired by Tuvi Etzion's publication in 1984. We then point out the limitation in generalizations of extended representation we use in the algorithm proposed, with a proof of the fact that only PSR and CSR contain pure cycles all dividing $n+1$.
\end{abstract}

\begin{keyword}
feedback shift register, de Bruijn cycle, cycle joining, symmetric boolean function
\MSC[2010] 94A55\sep  94A60\sep 94A99
\end{keyword}

\end{frontmatter}


\section{Introduction}

A binary de Bruijn sequence of order $n$, also referred as full-length shift-register cycle, is a binary sequence with period $2^n$, in which the $2^n$ possible $n$-consecutive digits are all different. Readers are referred to Golomb \cite{b} and Fredricksen \cite{c} for a comprehensive survey of de Bruijn sequences and algorithms.

The $i$th state of an $n$-stage feedback shift register (FSR) is denoted by $S_i=(a_i, a_{i+1}, \cdots, a_{i+n-1})$, an element of $F_2^n$. We denote the state diagram of an FSR with a feedback function $f(x_1, x_2, \cdots, x_n)$ as $G_{f(n)}$. Therefore, $G_{f(n)}$ consists of $2^n$ vertices corresponding to all $n$-tuples of $F_2^n$. The conjugate $\hat{S_i}$ and the companion $S_i^{\prime}$ of $S_i$ are respectively defined by $\hat{S_i}=(a_i\oplus 1, a_{i+1}, \cdots, a_{i+n-1})$ and $S_i^{\prime}=(a_i, a_{i+1}, \cdots, a_{i+n-1}\oplus 1)$, where $\oplus$ is addition modulo 2.

The feedback function $f(x_1, x_2, \cdots, x_n)$ then induces a next-state operator $\rho: F_2^n \mapsto F_2^n$, under which $\rho S_i=S_{i+1}$. A cycle $C$ of length $l$ is a cyclic sequence of $l$ distinct states, i.e. $C=(S_i, S_{i+1}, \cdots, S_{i+l-1})$, where $S_{i+j+1}=\rho S_{i+j}$ for $j=0, 1, \cdots, l-2$ and $S_i=\rho S_{i+l-1}$. An alternate representation of $C$ is to take the first digit of each state in order into a set, namely $C=(a_ia_{i+1}\cdots a_{i+l-1})$. Two cycles are called to be adjacent if they share a conjugate or companion pair \cite{f}.

Golomb \cite{b} gives the necessary and sufficient condition that an $n$-stage FSR produces pure cycles. That is the feedback function $f$ is nonsingular and can be written as $f(x_1, x_2, \cdots, x_n)=x_1+g(x_2, \cdots, x_n)$ where $g$ is a boolean function of $(n-1)$ variables. In this correspondence our attention will be restricted to nonsingular feedback functions only.

Following \cite{a}, we define the extended representation $E(C)$ of cycle $C$ as an $(n+1)$-length vector $E(C)=(x_1, x_2, \cdots, x_{n+1})$ where $x_{n+1}=f(x_1, x_2, \cdots, x_n)$. Recalling that the Hamming weight of a binary vector $\alpha=(a_1, a_2, \cdots, a_n)$ is $wt(\alpha)=\sum\limits_{i=1}^{n}a_{i}$, we then define the extended weight $W_E(C)$ of $C$ as $W_E(C)=wt(E(C))$.

Following \cite{d}, we let $f_k(x_1, x_2, \cdots, x_n)$ be the restriction of any given boolean function $f(x_1, x_2, \cdots, x_n)$ to the set $\{x=(x_1, x_2, \cdots, x_n) \in F_2^n\ |\ wt(x)=k \}$, where $0 \le k \le n$.

A boolean function $f$ of $n$ variables is said to be symmetric if $f(x_1, x_2, \cdots, x_n)=f(x_{P(1)}, x_{P(2)}, \cdots, x_{P(n)})$ for any permutation $P$ of $\{ 1, 2, \cdots, n\}$. In \cite{e} Canteaut and Videau refer to $v(f)=(v_f(0), v_f(1), \cdots, v_f(n))$ as the simplified value vector of $f$, in which $v_f(i)$ is a mapping from $\{ 0, 1, \cdots, n \}$ to $F_2$ and $f(x)=v_f(wt(x))$ for any $x \in F^n_2$. They also refer to $X_{i,n}$ as the elementary symmetric polynomial of degree $i$ in $n$ variables, viz. $X_{0,n}=1$ and $X_{j,n}=\sum\limits_{1\le i_1<i_2<\cdots<i_j\le n} x_{i_1}x_{i_2}\cdots x_{i_j}$, where $j=1, 2, \cdots, n$. Knowing that a symmetric boolean function $f$ of $n$ variables can be written as $f(x_1, x_2, \cdots, x_n)=\mathop{\oplus}\limits_{i=0}^{n}\lambda_f(i)X_{i,n} \, \, (\lambda_f(i) \in F_2)$, the simplified ANF vector of $f$ is then defined by the $(n+1)$-bit vector $\lambda_f=(\lambda_f(0), \lambda_f(1), \cdots, \lambda_f(n))$.

\section{Cycle structure of PSR and CSR}

In \cite{b}, Golomb gives the numbers of cycles for a pure summing register of length $n-1$ (PSR$_{n-1}$): $f_P(x_1, x_2, \cdots, x_{n-1})=x_1\oplus x_2 \oplus \cdots \oplus x_{n-1}$ and a complementary summing register of length $n-1$ (CSR$_{n-1}$): $f_C(x_1, x_2, \cdots, x_{n-1})=x_1\oplus x_2 \oplus \cdots \oplus x_{n-1} \oplus 1$ respectively by
$$S(n-1)=\frac{1}{2n}\sum_{d\mid n}\phi (d)2^{\frac{n}{d}}+\frac{1}{2n}\sum_{even\ d\mid n}\phi (d)2^{\frac{n}{d}}$$
and
$$S^*(n-1)=\frac{1}{2n}\sum_{odd\ d\mid n}\phi (d)2^{\frac{n}{d}}.$$
We have the next two lemmas.
\begin{lemma}\label{Lemma01}
For {\rm PSR}$_n$: $f_P(x_1, x_2, \cdots, x_n)=x_1 \oplus x_2 \oplus \cdots \oplus x_n,$

(1) For even $n$, where $d\mid (n+1)$ holds, or for odd $n$, where $d\mid (n+1)$ holds and $(n+1)/d$ is odd, the number of cycles with length $d$ is
$$M(d)=\frac{1}{2d}\sum_{d^{\prime}\mid d}\mu (d^{\prime})2^{\frac{d}{d^{\prime}}}+\frac{1}{2d}\sum_{even\ d^{\prime}\mid d}\mu (d^{\prime})2^{\frac{d}{d^{\prime}}}.$$

(2) For odd $n$, where $d\mid (n+1)$ holds and $(n+1)/d$ is even, the number of cycles with length $d$ is
$$M(d)=\frac{1}{d}\sum_{d^{\prime}\mid d}\mu (d^{\prime})2^{\frac{d}{d^{\prime}}}.$$
\end{lemma}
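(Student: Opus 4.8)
The plan is to reduce the entire statement to counting binary necklaces of length $d$ subject to a weight–parity constraint, after first pinning down the period of every sequence generated by {\rm PSR}$_n$.

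First I would establish the period property. Writing the output sequence as $(a_i)$, the feedback law reads $a_{i+n}=a_i\oplus a_{i+1}\oplus\cdots\oplus a_{i+n-1}$, which rearranges to $a_i\oplus a_{i+1}\oplus\cdots\oplus a_{i+n}=0$ for every $i$; that is, every block of $n+1$ consecutive digits sums to $0$. Subtracting the relation at $i$ from the one at $i+1$ (over $F_2$) gives $a_{i+n+1}=a_i$, so each sequence, and hence each cycle, has least period dividing $n+1$. Consequently every cycle length $d$ satisfies $d\mid(n+1)$, which is exactly why only such $d$ occur in the statement. Via the alternate representation, a cycle of length $d$ is precisely a binary necklace (cyclic word) of length $d$ whose least period equals $d$, and conversely, so it suffices to count such necklaces.

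Next I would identify which length-$d$ necklaces are admissible. If a cyclic word of least period $d$ has one period of Hamming weight $W$ and we set $m=(n+1)/d$, then a window of $n+1=md$ consecutive digits covers $m$ full periods, so its block sum equals $mW \bmod 2$. The constraint ``every block of $n+1$ digits sums to $0$'' therefore holds iff $mW\equiv 0\pmod 2$: when $m$ is even this is automatic and every length-$d$ necklace is admissible, whereas when $m$ is odd it forces $W$ even, so only even-weight necklaces survive. Splitting by the parity of $n$, for even $n$ the modulus $n+1$ is odd and every divisor $d$ has $m$ odd, while for odd $n$ the quantity $m$ is odd exactly when $(n+1)/d$ is odd. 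This matches the case division of the lemma, with case (1) being the ``$m$ odd / even-weight only'' regime and case (2) the ``$m$ even / no constraint'' regime. The number of binary necklaces of length $d$ with least period exactly $d$ is $\frac1d\sum_{d'\mid d}\mu(d')2^{d/d'}$, by Möbius inversion of $2^d=\sum_{e\mid d}(\text{words of least period }e)$ followed by division by $d$; this gives case (2) directly.

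For case (1) I would compute the number $a(d)$ of even-weight words of length $d$ with least period $d$. Setting $T(d)=\sum_{\mathrm{lp}(x)=d}(-1)^{wt(x)}$ and using that a period-$e$ word $w^{d/e}$ satisfies $(-1)^{wt}=\bigl((-1)^{wt(w)}\bigr)^{d/e}$, the identity $\sum_{x\in F_2^d}(-1)^{wt(x)}=0$ splits, according to the parity of $d/e$, into a known part built from the primitive-word counts $L(e)=\sum_{d'\mid e}\mu(d')2^{e/d'}$ and the unknown $T(e)$. Since the term $e=d$ contributes $T(d)$ and all other terms involve smaller arguments, this relation determines $T$ by induction; organizing the inversion by the $2$-adic valuation of $d$ yields the closed form $T(d)=\sum_{\text{even }d'\mid d}\mu(d')2^{d/d'}$. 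Combining with $a(d)+b(d)=L(d)$ gives $a(d)=\tfrac12\bigl(\sum_{d'\mid d}\mu(d')2^{d/d'}+\sum_{\text{even }d'\mid d}\mu(d')2^{d/d'}\bigr)$, and dividing by $d$ produces the stated $M(d)$. The main obstacle is precisely this last evaluation: the parity interaction between $d/e$ and $wt(w)$ means one cannot merely halve the total count, and obtaining the clean answer requires tracking the $2$-part of $d$ through the inversion; the cleanest finish is to verify that the candidate $\sum_{\text{even }d'\mid d}\mu(d')2^{d/d'}$ satisfies the same inversion relation as $T$ and agrees at $d=1$. Everything else—the period bound and the parity dichotomy—is routine once the block-sum identity is in hand.
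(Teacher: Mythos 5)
Your argument is correct, and its first half is exactly the paper's setup: the paper likewise derives that every cycle length divides $n+1$ (its Equation (1), the decomposition of the $(n+1)$-window into $\frac{n+1}{d}$ copies of a length-$d$ section) and characterizes admissibility by the parity of $(\frac{n+1}{d})wt(a_1,\dots,a_d)$, with the same case dichotomy you describe. Where you genuinely differ is in extracting $M(d)$. The paper counts \emph{states} lying on cycles of length dividing $d$ (getting $2^{d-1}$ or $2^{d}$ according to the case) and Möbius-inverts the relation $\sum_{d'\mid d}d'M(d')=N(d)$; in the delicate mixed sub-case (odd $n$ with $(n+1)/d$ odd), where $N(e)$ equals $2^{e-1}$ for some divisors $e\mid d$ and $2^{e}$ for others, the paper merely asserts ``it is easy to prove'' the stated formula. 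You instead count words of least period exactly $d$: the unrestricted primitive-word count $L(d)=\sum_{d'\mid d}\mu(d')2^{d/d'}$ gives case (2) immediately, and for case (1) you introduce the signed count $T(d)=\sum_{\mathrm{lp}(x)=d}(-1)^{wt(x)}$, determine it by splitting $\sum_{x\in F_2^d}(-1)^{wt(x)}=0$ over least periods to get $T(d)=\sum_{\text{even }d'\mid d}\mu(d')2^{d/d'}$, and set $a(d)=\tfrac12\bigl(L(d)+T(d)\bigr)$. Your closed form for $T$ is correct (e.g. $T(2)=-2$, $T(4)=-4$, $T(6)=-6$ all match), and since the relation $0=\sum_{e\mid d,\ d/e\ \mathrm{even}}L(e)+\sum_{e\mid d,\ d/e\ \mathrm{odd}}T(e)$ expresses $T(d)$ in terms of strictly smaller arguments, your plan of checking that the candidate satisfies the same recursion is a legitimate strong-induction finish, not a gap. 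The trade-off: the paper's state-counting inversion is shorter when the weight constraint is uniform over all divisors, while your $T$-inversion treats both sub-cases of (1) in one formula (for even $n$ the divisor $d$ is odd, so the second sum is empty) and supplies in full precisely the computation the paper omits.
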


\begin{proof}
When $d\mid (n+1)$ holds, assume that there is a state $(a_1,a_2,\cdots,a_n)\in {\rm PSR}_n$ on a cycle whose length is a factor of $d$, we have $(a_1,a_2,\cdots,a_n,a_{n+1})=(a_{d+1},a_{d+2},\cdots,a_{d+n},a_{d+n+1})$. It implies that $a_i=a_{d+i}=a_{2d+i}=\cdots=a_{(\frac{n+1}{d}-1)d+i}$ for $i=1,2,\cdots,d.$ Thus, the $(n+1)$-length vector $(a_1,a_2,\cdots,a_n,a_{n+1})$ consists of $\frac{n+1}{d}$ sections $(a_1,a_2,\cdots,a_d)$ with length $d$, which are linked end to end:
\begin{equation}\label{Equation1}
(a_1,a_2,\cdots,a_n,a_{n+1})=(\overbrace{\overbrace{a_1,a_2,\cdots,a_d}^{d},\overbrace{a_1,a_2,\cdots,a_d}^{d},\cdots,\overbrace{a_1,a_2,\cdots,a_d}^{d}}^{\frac{n+1}{d}})
\end{equation}
Assume that $(a_1,a_2,\cdots,a_d)$ is any $d$-length vector on $F_2$, then the state $(a_1,a_2,\cdots,a_n)$ from Equation (\ref{Equation1}) belongs to {\rm PSR}$_n$ only when
$(\frac{n+1}{d})wt(a_1,a_2,\cdots,a_d)$ is even, where $wt(a_1,a_2,\cdots,a_d)\in \{0,1,2,\cdots,d\}.$

Case one: When $(n+1)$ is odd, i.e. $n$ is even, $\frac{n+1}{d}$ must be odd and $wt(a_1,a_2,\cdots,a_d)$ must be even. All states $(a_1,a_2,\cdots,a_n)$ corresponding to Equation (\ref{Equation1}) must belong to those cycles of {\rm PSR}$_n$ whose lengths are factors of $d$. Therefore, in $G_{f_P(n)}$ there are
$$\binom{0}{d}+\binom{2}{d}+\binom{4}{d}+\cdots+\binom{2\lfloor \frac{d}{2}\rfloor}{d}=2^{d-1}$$
states on those cycles with length dividing $d$. In addition, in $G_{f_P(n)}$ those cycles whose lengths are factors of $d$ contain $\sum\limits_{d^{\prime}\mid d}d^{\prime}M(d^{\prime})$ states. Naturally, we have $\sum\limits_{d^{\prime}\mid d}d^{\prime}M(d^{\prime})=2^{d-1}$, and the solution is $M(d)=\frac{1}{2d}\sum\limits_{d^{\prime}\mid d}\mu (d^{\prime})2^{\frac{d}{d^{\prime}}}.$

Case two: When $(n+1)$ is even, i.e. $n$ is odd, let $n+1=2^km\ (m\ is\ odd)$ and discuss two sub-cases.

(1)  When $(n+1)/d$ is even, $wt(a_1,a_2,\cdots,a_d)$ can be any value of $\{0,1,2,\cdots,d\}$. Let $d=2^{k_1}m_1$, where $m_1\mid m$ and $k_1=0,1,2,\cdots,k-1$, we have $\sum\limits_{d^{\prime}\mid d}d^{\prime}M(d^{\prime})=2^d$, the solution of which is $M(d)=\frac{1}{d}\sum\limits_{d^{\prime}\mid d}\mu (d^{\prime})2^{\frac{d}{d^{\prime}}}.$

(2) When $(n+1)/d$ is odd, let $d=2^km_1$, where $m_1\mid m$. It is easy to prove that
$$M(d)=\frac{1}{2d}\sum_{d^{\prime}\mid d}\mu (d^{\prime})2^{\frac{d}{d^{\prime}}}+\frac{1}{2d}\sum_{even\ d^{\prime}\mid d}\mu (d^{\prime})2^{\frac{d}{d^{\prime}}}.$$
\end{proof}

\begin{lemma}\label{Lemma02}
For {\rm CSR}$_n$: $f_C(x_1, x_2, \cdots, x_n)=1 \oplus x_1 \oplus x_2 \oplus \cdots \oplus x_n$,

(1) For even $n$, where $d\mid (n+1)$ holds, the number of cycles with length $d$ is
$$M^*(d)=\frac{1}{2d}\sum_{d^{\prime}\mid d}\mu (d^{\prime})2^{\frac{d}{d^{\prime}}}.$$

(2) For odd $n$, let $n+1=2^km$, where $m$ is odd and $d\mid m$ holds, the number of cycles with length $d$ is
$$M^*(2^kd)=\frac{1}{2^{k+1}d}\sum_{d^{\prime}\mid d}\mu (d^{\prime})2^{2^k\frac{d}{d^{\prime}}}.$$
Equivalently, for odd $n$, where $d\mid (n+1)$ holds and $\frac{n+1}{d}$ is odd, the number of cycles with length $d$ is
$$M^*(d)=\frac{1}{2d}\sum_{odd\ d^{\prime}\mid d}\mu (d^{\prime})2^{\frac{d}{d^{\prime}}}.$$
\end{lemma}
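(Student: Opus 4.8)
The plan is to mirror the proof of Lemma~\ref{Lemma01}, exploiting the extended representation together with the single feature that distinguishes CSR from PSR: since $f_C = 1 \oplus x_1 \oplus \cdots \oplus x_n$, the extended vector $(a_1, \ldots, a_n, a_{n+1})$ with $a_{n+1} = f_C(a_1, \ldots, a_n)$ always satisfies $a_1 \oplus \cdots \oplus a_{n+1} = 1$; that is, the extended weight $W_E(C)$ is \emph{odd}, whereas for PSR it is even. First I would record that every CSR sequence obeys $a_{i+n+1} = a_i$: adding the feedback relations at positions $i$ and $i+1$ cancels the constant and the shared terms, leaving $a_{i+n+1}\oplus a_{i+n}=a_{i+n}\oplus a_i$. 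Hence, exactly as for PSR, all cycle lengths divide $n+1$, which justifies restricting attention to $d \mid (n+1)$.

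Next, fixing $d \mid (n+1)$ and a state on a cycle whose length is a factor of $d$, the periodicity $a_{d+i} = a_i$ makes the extended vector the $\frac{n+1}{d}$-fold repetition of a $d$-block $(a_1, \ldots, a_d)$, exactly as in Equation~(\ref{Equation1}). The key step is to convert the feedback constraint $a_{n+1} = f_C(a_1, \ldots, a_n)$ into a condition on this block: since $a_{n+1} \oplus (a_1 \oplus \cdots \oplus a_n) = 1$ is forced for CSR, and the left side has the parity of the full extended weight $\frac{n+1}{d}\, wt(a_1, \ldots, a_d)$, the block is admissible precisely when $\frac{n+1}{d}\, wt(a_1, \ldots, a_d)$ is \emph{odd}. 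Thus both $\frac{n+1}{d}$ and $wt(a_1, \ldots, a_d)$ must be odd. Conversely, any odd-weight $d$-block with $\frac{n+1}{d}$ odd yields, upon repetition, a genuine CSR sequence (the recurrence then holds at every index), so the odd-weight blocks are in bijection with the states lying on cycles of length dividing $d$.

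The counting and inversion then split by the parity of $n$. For even $n$, $n+1$ is odd, so $\frac{n+1}{d}$ is automatically odd for every $d \mid (n+1)$ and only $wt(a_1, \ldots, a_d)$ odd is required; the number of such blocks is $\binom{d}{1} + \binom{d}{3} + \cdots = 2^{d-1}$. Equating this with $\sum_{d' \mid d} d'\, M^*(d')$ and applying M\"obius inversion yields part~(1). For odd $n$, writing $n+1 = 2^k m$ with $m$ odd, the condition ``$\frac{n+1}{d}$ odd'' forces the cycle length to be of the form $2^k d$ with $d \mid m$; the admissible states on cycles dividing $2^k d$ correspond to the $2^{2^k d - 1}$ odd-weight blocks of length $2^k d$, giving $\sum_{d' \mid d} 2^k d'\, M^*(2^k d') = 2^{2^k d - 1}$. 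M\"obius inversion over the divisors of $d$ then produces the stated formula for $M^*(2^k d)$.

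The main obstacle I anticipate is the odd-$n$ inversion: one must recognize that the only valid cycle lengths carry the full $2$-part, i.e.\ they are the multiples $2^k d'$ with $d' \mid m$, so after dividing by $2^k$ the relation $\sum_{d' \mid d} d'\, M^*(2^k d') = 2^{2^k d - 1 - k}$ is indexed by $d' \mid d$ rather than by all divisors of $2^k d$ --- the ``phantom'' lengths $2^j d'$ with $j < k$ carry no cycles and must be excluded, or the inversion would be set up incorrectly. Finally, the equivalence of the two displayed forms for odd $n$ follows by substituting $d = 2^k e$ with $e \mid m$ and observing that the odd divisors of $d$ are exactly the divisors of its odd part $e$, so that $\frac{1}{2d}\sum_{odd\ d' \mid d}\mu(d')2^{d/d'}$ collapses to $\frac{1}{2^{k+1} e}\sum_{e' \mid e}\mu(e')2^{2^k \frac{e}{e'}}$, which is the first expression with $d$ renamed to $e$.
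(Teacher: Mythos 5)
Your proposal is correct and follows essentially the same route as the paper's proof: reduce to the repeated $d$-block via Equation~(\ref{Equation1}), characterize admissible blocks by the condition that $\frac{n+1}{d}\,wt(a_1,\ldots,a_d)$ be odd, count $2^{d-1}$ (resp.\ $2^{2^kd-1}$) such blocks, apply M\"obius inversion, and translate between the two displayed forms for odd $n$ by matching odd divisors of $2^kd$ with divisors of $d$. Your explicit attention to the converse bijection and to excluding the ``phantom'' lengths $2^jd'$ with $j<k$ from the inversion is a slightly more careful rendering of steps the paper leaves implicit, but it is the same argument.
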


\begin{proof}
Same assumptions as those in the proof of Lemma \ref{Lemma01}. The state $(a_1,a_2,\cdots,a_n)$ from Equation (\ref{Equation1}) belongs to {\rm CSR}$_n$ only when
$(\frac{n+1}{d})wt(a_1,a_2,\cdots,a_d)$ is odd, where $wt(a_1,a_2,\cdots,a_d)\in \{0,1,2,\cdots,d\}.$

Case one: When $(n+1)$ is odd, i.e. $n$ is even, $\frac{n+1}{d}$ must be odd and $wt(a_1,a_2,\cdots,a_d)$ must be odd, too. All states $(a_1,a_2,\cdots,a_n)$ corresponding to Equation (\ref{Equation1}) must belong to those cycles of {\rm CSR}$_n$ whose lengths are factors of $d$. Therefore, in $G_{f_C(n)}$ there are
$$\binom{1}{d}+\binom{3}{d}+\binom{5}{d}+\cdots+\binom{2\lfloor \frac{d}{2}\rfloor+1}{d}=2^{d-1}$$
states on those cycles with length dividing $d$. In addition, in $G_{f_C(n)}$ those cycles whose lengths are the factors of $d$ contain $\sum\limits_{d^{\prime}\mid d}d^{\prime}M(d^{\prime})$ states. Thus, $\sum\limits_{d^{\prime}\mid d}d^{\prime}M(d^{\prime})=2^{d-1}$ holds and we get $M^*(d)=\frac{1}{2d}\sum\limits_{d^{\prime}\mid d}\mu (d^{\prime})2^{\frac{d}{d^{\prime}}}.$

Case two: When $(n+1)$ is even, i.e. $n$ is odd, $\frac{n+1}{d}$ must be odd as well as $wt(a_1,a_2,\cdots,a_d)$. Let $n+1=2^km$, where $m$ is odd, we have $d=2^km_0$, where $m_0\mid m$. There are $2^{2^km_0-1}$ states of CSR$_n$ belonging to some cycles whose lengths are factors of $d$. We denote their lengths as $t$ and $t=2^km^{\prime}$ ($m^{\prime}\mid m_0$) must hold. In sum, in $G_{f_C(n)}$ there are $\sum\limits_{m^{\prime}\mid m_0}2^km^{\prime}M_2^*(2^km^{\prime})$ states on those cycles with length $t$. Therefore, we have
$$\sum_{m^{\prime}\mid m_0}2^km^{\prime}M_2^*(2^km^{\prime})=\frac{1}{2}2^{2^km_0}$$
with a solution
\begin{equation}\label{Equation2}
M_2^*(2^km_0)=\frac{1}{2^{k+1}m_0}\sum_{m^{\prime}\mid m_0}\mu (m^{\prime})2^{2^k\frac{m_0}{m^{\prime}}}.
\end{equation}

Since $d=2^km_0$, the traversal of $m^{\prime}$ with the constraint of $m^{\prime}\mid m_0$ is equivalent to the traversal of all odd factors of $d$ as well as the traversal of $d^{\prime}$ under the condition of $odd~d^{\prime}\mid d$. Then we have the equivalence between $m_0/m^{\prime}$ and $d/(2^kd^{\prime})$. From Equation (\ref{Equation2}), we have
\begin{align*}
M_2^*(d)&=\frac{1}{2d}\sum_{odd\ d^{\prime}\mid d}\mu (d^{\prime})2^{2^k\frac{d}{2^kd^{\prime}}}\\
&=\frac{1}{2d}\sum_{odd\ d^{\prime}\mid d}\mu (d^{\prime})2^{\frac{d}{d^{\prime}}}.
\end{align*}
Noted that $d=2^km_0$ ($m_0\mid m$), which is equivalent to the traversal of all $d$ satisfying $(n+1)/d$.
\end{proof}

\section{A generation algorithm of de Bruijn cycles from CSR}

\begin{lemma}\label{Lemma1}
For any cycle $C$ in {\rm CSR}$_n$ with length $l$, there are at most $l$ extend representations $E(C)$ and $W_E(C)=2k+1$ always holds, where $k$ is fixed and $0\le k\le \lfloor {\frac {n}{2}} \rfloor$. For each state $S$ on a given cycle $C$, we have $W_E(C)-1\le wt(S)\le W_E(C)$.
\end{lemma}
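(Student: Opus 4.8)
The plan is to reduce the whole statement to two observations: that the final coordinate of any extended representation is forced by a parity constraint, and that advancing one step along the cycle leaves the extended weight unchanged.

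First I would pin down the parity of the extended weight. For a state $S=(a_1,\cdots,a_n)$ lying on $C$, its extended representation is $E(S)=(a_1,\cdots,a_n,a_{n+1})$ with $a_{n+1}=f_C(a_1,\cdots,a_n)=1\oplus a_1\oplus\cdots\oplus a_n$. Rearranging gives $a_1\oplus a_2\oplus\cdots\oplus a_{n+1}=1$, so the number of $1$'s among the $n+1$ coordinates of $E(S)$ is odd; hence $W_E(C)=2k+1$ for some integer $k$. Since $E(S)$ has exactly $n+1$ entries, $0\le 2k+1\le n+1$, which yields $0\le k\le\lfloor n/2\rfloor$.

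The crux is to show that this $k$ does not depend on the chosen state, i.e. that $W_E$ is constant along $C$. Passing from $S_i$ to its successor $S_{i+1}$, the extended representation drops the leading bit $a_i$ and appends $a_{i+n+1}$, while the middle block $a_{i+1},\cdots,a_{i+n}$ is untouched; consequently $wt(E(S_{i+1}))-wt(E(S_i))=a_{i+n+1}-a_i\in\{-1,0,1\}$. But by the previous paragraph both weights are odd, so their difference is even, and the only even value in $\{-1,0,1\}$ is $0$. Hence the weight is preserved and $k$ is fixed. (Equivalently, one may verify directly from the CSR recurrence that $a_{i+n+1}=1\oplus a_{i+1}\oplus\cdots\oplus a_{i+n}=a_i$, but the parity argument is cleaner and makes transparent why the $\oplus 1$ in $f_C$ is exactly what forces equality.) This constancy of the extended weight is the only genuinely delicate point; the remaining claims are bookkeeping.

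The rest then follows at once. There are $l$ states on $C$, and each one determines a single extended representation, so there are at most $l$ of them. Finally, for any state $S$ on $C$ we have $wt(E(S))=wt(S)+a_{n+1}$ with $a_{n+1}\in\{0,1\}$; since $wt(E(S))=W_E(C)$ is now the common constant value, this reads $wt(S)=W_E(C)-a_{n+1}$, so $wt(S)\in\{W_E(C)-1,\,W_E(C)\}$, which is precisely $W_E(C)-1\le wt(S)\le W_E(C)$. I expect no obstacle beyond the constancy step already highlighted.
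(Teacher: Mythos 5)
Your proof is correct and rests on the same core identity the paper invokes, namely $W_E(C)=wt(S)+(a_1\oplus\cdots\oplus a_n\oplus 1)$, which forces the extended weight to be odd and pins $wt(S)$ to $\{W_E(C)-1,\,W_E(C)\}$; the paper simply declares the whole conclusion ``obvious'' from that fact. Your explicit verification that the extended weight is constant along the cycle (odd weights whose successive difference lies in $\{-1,0,1\}$ must agree, equivalently $a_{i+n+1}=a_i$) is precisely the step the paper leaves implicit, so your write-up is a fleshed-out version of the same argument rather than a different route.
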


\begin{proof}
The conclusion is obvious from the fact $W_E(C)=x_0+x_1+\cdots +x_{n-1}+(x_0\oplus x_1 \oplus \cdots \oplus x_{n-1}\oplus 1)$.
\end{proof}

Following \cite{a}, we refer to a cycle $C$ as a run-cycle if all the ones in $E(C)$ form a cyclic run and define the preferred state $P(C)$ of each cycle $C$ in CSR$_n$ as follows. For a run-cycle $C$, $P(C)=(1^{2k+1}0^{n-2k-1})$. For  a non-run-cycle $C$, a unique extended representation $E^*(C)$ is an $(n+1)$-length vector $[0^r1^t0a_1\cdots a_{n-r-t-2}10]$ $(r\ge 0)$ where $t$ is the length of the longest cyclic run of ones and $E^*(C)$ is the largest in base-2 notation among all $(n+1)$-tuples in this form. Then the preferred state is $P(C)=(0^r1^t0a_1\cdots a_{n-r-t-2}1)$.

\begin{lemma}\label{Lemma2}
For any non-run-cycle $C_1$ in {\rm CSR$_n$}, let $P(C_1)=(0^r1^{t_1}0a_1\cdots a_{n-r-t_1-2}1)$, we have $B=(10^r1^{t_1}0a_1\cdots a_{n-r-t_1-2})$ and $P(C_1)^\prime$ are on another cycle $C_2$ with $W_E(C_2)=W_E(C_1)$. Moreover, let $t_2$ be the length of the longest cyclic run of ones in $C_2$, we have either $t_2=t_1+1$ or $t_2=t_1$ and $\lvert P(C_2) \rvert>\lvert P(C_1) \rvert$.
\end{lemma}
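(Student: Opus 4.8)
The plan is to realise $C_2$ through the companion/conjugate adjacency that $B$ encodes, to verify weight invariance, and then to read off $t_2$ and the preferred state directly from the necklace $E(B)$. First I would pin down the successor of $B=(1\,0^{r}1^{t_1}0\,a_1\cdots a_{n-r-t_1-2})$. Its Hamming weight equals that of $P(C_1)=(0^{r}1^{t_1}0\,a_1\cdots a_{n-r-t_1-2}1)$, namely $t_1+1+wt(a_1\cdots a_{n-r-t_1-2})$. Since $E^{*}(C_1)$ ends in $0$ we have $f_C(P(C_1))=0$, so by Lemma \ref{Lemma1} this common weight is odd; hence $f_C(B)=1\oplus wt(B)=0$. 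Shifting therefore gives $\rho B=(0^{r}1^{t_1}0\,a_1\cdots a_{n-r-t_1-2}0)=P(C_1)^{\prime}$, so $B$ and $P(C_1)^{\prime}$ lie on one cycle $C_2$; dually the predecessor of $P(C_1)$ on $C_1$ is the conjugate $\hat B$, so $C_1$ and $C_2$ are exactly the two cycles meeting at the companion pair $\{P(C_1),P(C_1)^{\prime}\}$. For the weight claim I would note that $E(B)=(1\,0^{r}1^{t_1}0\,a_1\cdots a_{n-r-t_1-2}0)$ is a bit-rearrangement of $E^{*}(C_1)=(0^{r}1^{t_1}0\,a_1\cdots a_{n-r-t_1-2}10)$ (the trailing $1$ moved to the front), so they carry the same Hamming weight and $W_E(C_2)=W_E(C_1)$.

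The remaining work is to analyse the necklace $E(B)$, splitting on $r$. If $r=0$ the prepended $1$ abuts the block $1^{t_1}$, producing a run $1^{t_1+1}$ strictly longer than any run inside $a_1\cdots a_{n-r-t_1-2}$ (all of which have length $\le t_1$, being runs of $C_1$), so $t_2=t_1+1$. If $r\ge 1$ the prepended $1$ is isolated between two zeros, the longest run is unchanged, $t_2=t_1$, and $C_2$ is still a non-run-cycle because its number of ones $W_E(C_2)>t_2$ exceeds its longest run. In this subcase I would exhibit the explicit rotation $E^{\ast\ast}=(0^{r-1}1^{t_1}0\,a_1\cdots a_{n-r-t_1-2}010)$ of $E(B)$: it has the admissible shape $[0^{r-1}1^{t_1}0\cdots 10]$, hence is a candidate for $E^{*}(C_2)$, so $E^{*}(C_2)\ge E^{\ast\ast}$ in base-$2$ value. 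Since $E^{\ast\ast}$ carries one fewer leading zero than $E^{*}(C_1)$, a bit-by-bit comparison gives $E^{\ast\ast}>E^{*}(C_1)$, whence $E^{*}(C_2)>E^{*}(C_1)$; as both extended representations end in $0$ we have $\lvert E^{*}(C)\rvert=2\lvert P(C)\rvert$, and therefore $\lvert P(C_2)\rvert>\lvert P(C_1)\rvert$.

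Finally, $C_2\ne C_1$ follows in both cases from this strict growth, since the pair (longest cyclic run of ones, preferred-state value) is a cycle invariant that rises from $(t_1,\lvert P(C_1)\rvert)$ to $(t_1+1,\,\ast)$ or to $(t_1,\lvert P(C_2)\rvert)$ with $\lvert P(C_2)\rvert>\lvert P(C_1)\rvert$. I expect the delicate step to be the $r\ge 1$ base-$2$ comparison: one must confirm that $t_2=t_1$ really is the longest run, so that the admissible shape uses exactly $1^{t_1}$, and that the base-$2$-maximal $E^{*}(C_2)$ cannot have more leading zeros than the exhibited rotation $E^{\ast\ast}$ — that is, that passing to the maximum only decreases the leading-zero count — which is what forces $\lvert P(C_2)\rvert>\lvert P(C_1)\rvert$ no matter how many length-$t_1$ runs occur inside $a_1\cdots a_{n-r-t_1-2}$.
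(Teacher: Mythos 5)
Your proof is correct, but be aware that the paper does not actually prove Lemma~\ref{Lemma2}: it defers entirely to Etzion and Lempel \cite{a}, where the analogous statement is established for the pure summing register. Your argument is therefore a self-contained adaptation to {\rm CSR}$_n$, and every step checks out: the oddness of $W_E(C_1)$ (Lemma~\ref{Lemma1}) gives $f_C(B)=0$, hence $\rho B=P(C_1)^{\prime}$, and $W_E(C_2)=W_E(C_1)$ since $E(B)$ is a rearrangement of $E^*(C_1)$; the case split on $r$ correctly yields $t_2=t_1+1$ when $r=0$ (the prepended one merges with $1^{t_1}$, while runs inside the $a$-part can only shrink once the trailing $1$ of $E^*(C_1)$ is detached) and $t_2=t_1$ when $r\ge 1$ (the prepended one is isolated, and $C_2$ remains a non-run-cycle because $W_E(C_2)=W_E(C_1)>t_1=t_2$); and the rotation-by-two $E^{**}=(0^{r-1}1^{t_1}0a_1\cdots a_{n-r-t_1-2}010)$ is a legitimate extended representation of $C_2$ in admissible form, so $E^*(C_2)\ge E^{**}>E^*(C_1)$, and since both end in $0$ this halves to $\lvert P(C_2)\rvert>\lvert P(C_1)\rvert$; finally $C_1\ne C_2$ follows because the longest cyclic run (case $r=0$) or the preferred state (case $r\ge 1$) is a cycle invariant that strictly changes. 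Two remarks. First, the worry in your closing paragraph is unfounded: no leading-zero analysis of the maximizer $E^*(C_2)$ is needed, because the chain of base-$2$ inequalities $E^*(C_2)\ge E^{**}>E^*(C_1)$ already delivers the conclusion, and $E^*(C_2)\ge E^{**}$ is immediate from the maximality in the definition of $E^*$. Second, you silently treat rotations of $E(B)$ as exactly the extended representations of $C_2$, i.e.\ you read the cyclic run structure of $C_2$ off the $(n+1)$-necklace $E(B)$; this rests on the fact that every cycle length in {\rm CSR}$_n$ divides $n+1$ (Lemma~\ref{Lemma02}), which is the very property (Lemma~\ref{Lemma6}) the paper later shows characterizes PSR and CSR, so it deserves an explicit mention in your write-up.
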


\begin{lemma}\label{Lemma3}
Let a state $U_{2k+1}=(u_1, u_2, \cdots, u_{n-1}, 1)$ of a cycle $C_1$ in {\rm CSR$_n$} with $wt(U_{2k+1})+1=W_E(C_1)=2k+1$, i.e. $wt(U_{2k+1})=2k (k\ge 1)$, then its companion $U_{2k+1}^{\prime}$ is on another cycle $C_2$ with $W_E(C_2)=2k-1$.
\end{lemma}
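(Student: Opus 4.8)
The plan is to read off the extended weight of $C_2$ directly from the two structural facts already established in Lemma \ref{Lemma1}, namely that the extended weight of every cycle in {\rm CSR}$_n$ is odd, and that every state $S$ on a cycle $C$ obeys $W_E(C)-1\le wt(S)\le W_E(C)$. No cycle-joining argument or explicit traversal of $C_2$ will be needed.

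First I would compute the Hamming weight of the companion. Since $U_{2k+1}=(u_1,\ldots,u_{n-1},1)$ has $wt(U_{2k+1})=2k$, its companion $U_{2k+1}^{\prime}=(u_1,\ldots,u_{n-1},0)$ is obtained by flipping the terminal $1$ to $0$, so that $wt(U_{2k+1}^{\prime})=2k-1$. Note that the hypothesis $k\ge 1$ guarantees $2k-1\ge 1$, so this will remain an admissible extended weight.

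Next, let $C_2$ be the unique cycle of {\rm CSR}$_n$ on which $U_{2k+1}^{\prime}$ lies; this is well defined because $f_C$ is nonsingular, so the state diagram $G_{f_C(n)}$ decomposes into disjoint cycles. Applying the weight bound of Lemma \ref{Lemma1} to the state $U_{2k+1}^{\prime}\in C_2$ gives $W_E(C_2)-1\le 2k-1\le W_E(C_2)$, i.e. $W_E(C_2)\in\{2k-1,2k\}$. Because Lemma \ref{Lemma1} forces $W_E(C_2)$ to be odd while $2k$ is even, the only surviving value is $W_E(C_2)=2k-1$.

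Finally I would confirm that $C_2$ is genuinely distinct from $C_1$: since $W_E(C_1)=2k+1\ne 2k-1=W_E(C_2)$ and the extended weight is a cycle invariant, $C_1$ and $C_2$ cannot coincide. The argument contains no hard computational step; the only point requiring care is to invoke \emph{both} halves of Lemma \ref{Lemma1} — the weight bound to trap $W_E(C_2)$ between $2k-1$ and $2k$, and the parity to discard the even endpoint — together with the elementary observation that passing to the companion lowers the weight by exactly one.
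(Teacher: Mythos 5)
Your proof is correct. Note that the paper offers no proof of this lemma at all---it simply defers to Etzion--Lempel \cite{a}---so your argument fills that gap rather than paralleling a written one: the companion flips the terminal $1$, giving $wt(U_{2k+1}^{\prime})=2k-1$; the weight bound of Lemma \ref{Lemma1} then traps $W_E(C_2)$ in $\{2k-1,2k\}$; the oddness claim of Lemma \ref{Lemma1} discards $2k$; and invariance of the extended weight separates $C_2$ from $C_1$. This weight-parity reasoning is exactly the CSR transposition of the argument used in the cited reference for the PSR case, so it is both correct and the intended route.
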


Proofs of Lemma \ref{Lemma2} and Lemma \ref{Lemma3} refer to \cite{a}.

For ease of notations, we define the parity of a vector $S_i=(a_i, a_{i+1}, \cdots, a_{i+n-1})$ as $p_i=a_i\oplus a_{i+1}\oplus \cdots \oplus a_{i+n-1}$.

\begin{lemma}\label{Lemma4}
Let $S_i=(a_i, a_{i+1}, \cdots, a_{i+n-1})$ be the predecessor of either $U_{2k+1}$ or $U_{2k+1}^{\prime}$, where $U_{2k+1}=(u_1, u_2, \cdots, u_{n-1}, 1)$ and $wt(U_{2k+1})=2k$, then $p_i=a_i\oplus 1$.
\end{lemma}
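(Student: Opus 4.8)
The plan is to compute $p_i$ directly from the explicit form of the predecessor state, reducing the claim to a one-line parity count. The starting observation is that $U_{2k+1}=(u_1,\dots,u_{n-1},1)$ and its companion $U_{2k+1}^{\prime}=(u_1,\dots,u_{n-1},0)$ agree in their first $n-1$ coordinates and differ only in the last bit. Because the {\rm CSR} feedback is nonsingular, the predecessor under $\rho$ is unique, and since $\rho$ drops the leading coordinate and appends the feedback bit, the predecessor of any state $(b_1,\dots,b_n)$ must have the shape $(a,b_1,\dots,b_{n-1})$. Applying this to both $U_{2k+1}$ and $U_{2k+1}^{\prime}$ shows that in either case the predecessor is $S_i=(a_i,u_1,\dots,u_{n-1})$ with the same tail $(u_1,\dots,u_{n-1})$; only the leading bit $a_i$ can differ between the two cases.

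Next I would read off the parity of the tail from the weight hypothesis. Since $wt(U_{2k+1})=2k$ and the final coordinate of $U_{2k+1}$ equals $1$, the remaining coordinates satisfy $u_1+\dots+u_{n-1}=2k-1$, which is odd, so $u_1\oplus\dots\oplus u_{n-1}=1$. Substituting $S_i=(a_i,u_1,\dots,u_{n-1})$ into the definition of the parity of a state then yields
$$p_i=a_i\oplus u_1\oplus\dots\oplus u_{n-1}=a_i\oplus 1,$$
which is exactly the assertion.

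The one point worth emphasizing, and the only place where any care is needed, is that the two alternatives (predecessor of $U_{2k+1}$ versus predecessor of $U_{2k+1}^{\prime}$) do not have to be treated separately: because companions share their first $n-1$ coordinates, both predecessors carry the identical tail $(u_1,\dots,u_{n-1})$, so $p_i$ is insensitive to which successor we started from. Once this is recognized, the result is immediate; in particular there is no genuine obstacle, and neither the feedback value $1\oplus p_i$ nor the exact value of $a_i$ needs to be computed to reach the conclusion.
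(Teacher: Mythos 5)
Your proof is correct and is essentially the paper's own argument: the paper's one-line proof simply notes that $wt(a_{i+1},\cdots,a_{i+n-1})=2k-1$ (the tail of $S_i$ coincides with the first $n-1$ coordinates of $U_{2k+1}$, whose weight is $2k$ minus the final $1$) and applies the definition of $p_i$, which is exactly your parity computation. Your additional remarks — that companions share their tail so the two cases collapse, and that neither $a_i$ nor the feedback value is needed — are just explicit statements of what the paper leaves implicit.
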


\begin{proof}
With $wt(a_{i+1}, \cdots, a_{i+n-1})=2k-1$ and the definition of $p_i$, the proof is complete.
\end{proof}

\begin{lemma}\label{Lemma5}
Let $S_i=(a_i, a_{i+1}, \cdots, a_{i+n-1})$ be a state of cycle $C$ in {\rm CSR$_n$}. If $p_i=a_i\oplus 1$, then $S_i$ is not $P(C)$ or $P(C)^\prime$.
\end{lemma}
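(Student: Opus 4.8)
The plan is to recast the parity hypothesis as a statement about the extended representation, and then confront it with the shape of $P(C)$. First I would record the identity $f_C(S_i)=1\oplus a_i\oplus a_{i+1}\oplus\cdots\oplus a_{i+n-1}=1\oplus p_i$, which is immediate from the definitions of $f_C$ and of $p_i$. Hence $p_i=a_i\oplus 1$ is equivalent to $f_C(S_i)=a_i$; that is, in $E(S_i)=(a_i,a_{i+1},\dots,a_{i+n-1},f_C(S_i))$ the appended last coordinate equals the leading coordinate $a_i$. Next I would compute the appended bit of $P(C)$ once and for all: since $W_E(C)=2k+1$ is odd by Lemma~\ref{Lemma1} and the canonical representation drops a trailing $0$, we have $wt(P(C))=W_E(C)=2k+1$, whence $f_C(P(C))=1\oplus wt(P(C))=0$. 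So if $S_i=P(C)$ satisfied the hypothesis we would be forced to have $a_i=0$, i.e. $P(C)$ would have to begin with a $0$.

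This localizes the $P(C)$ part of the statement to a single claim: a preferred state never begins with $0$. For a run-cycle this is immediate, since $P(C)=(1^{2k+1}0^{n-2k-1})$ begins with $1$, so $f_C(P(C))=0\neq 1=a_i$ and $S_i\neq P(C)$. For a non-run-cycle I would use the canonical extended representation $E^*(C)=[0^r1^t0a_1\cdots a_{n-r-t-2}10]$: its trailing bit is $0$ (consistent with $f_C(P(C))=0$), and the point is to force the leading bit to be $1$, i.e. $r=0$. I expect this to be the main obstacle, because the defining form a priori permits $r\ge 1$; the argument must use that $E^*(C)$ is the \emph{largest} admissible $(n+1)$-tuple in base-$2$ together with the longest-run structure from Lemma~\ref{Lemma2} to rule out a selected rotation that opens with a $0$. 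Extracting ``$r=0$'' cleanly from the maximality and the choice of $t$ is the delicate step on which the whole $P(C)$ case rests.

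For the $P(C)^{\prime}$ part a parity count cannot work, and I would flag this explicitly: flipping the last state-coordinate flips $f_C$, so it turns the relation $f_C(P(C))\neq a_i$ into $f_C(P(C)^{\prime})=a_i$, meaning $P(C)^{\prime}$ itself does satisfy $p_i=a_i\oplus 1$. Hence the only viable route is by location. By Lemma~\ref{Lemma2} (for non-run-cycles) and by the companion analysis underlying Lemma~\ref{Lemma3} (for the states arising from run-cycles), the companion $P(C)^{\prime}$ lies on a cycle $C_2$ distinct from $C$; since $S_i$ is assumed to lie on $C$, we get $S_i\neq P(C)^{\prime}$. Assembling (i) the extended-bit reformulation, (ii) the run / non-run split for $P(C)$, and (iii) the different-cycle argument for $P(C)^{\prime}$ then yields the lemma, with the two genuinely nontrivial points being the leading-coordinate claim in the non-run case and ensuring the ``$P(C)^{\prime}$ lies on another cycle'' fact is available uniformly across all cycle types.
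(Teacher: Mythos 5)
Your proof cannot be completed, and the step you yourself flagged as delicate is exactly where it breaks: the claim that a preferred state of a non-run-cycle must have $r=0$ (i.e.\ begin with a $1$) is false. The definition of $E^*(C)$ only allows you to choose among rotations of $E(C)$ that match the pattern $[0^r1^t0\cdots 10]$, and it can happen that no matching rotation has $r=0$, because the rotation that places the longest run of ones first need not end in ``$10$''. The paper's own Example~\ref{Example1} supplies counterexamples: the joining pairs $(50,49)$ and $(26,25)$ mean that $P(C)=(0,1,1,0,0,0,1)$ (with $E^*(C)=[0110001 0]$, so $r=1$) and $P(C)=(0,0,1,1,0,0,1)$ (with $E^*(C)=[00110010]$, so $r=2$) are preferred states, and both begin with $0$. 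Worse, both have parity $p_i=1$ and leading bit $a_i=0$, so they satisfy the hypothesis $p_i=a_i\oplus 1$ while \emph{being} $P(C)$: the lemma read literally, as you read it, is false, so no argument can close your gap. (Your $P(C)'$ half via Lemma~\ref{Lemma2} is sound, but it is attached to an unprovable $P(C)$ half.)

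What the paper actually proves --- and what Algorithm~\ref{Algorithm1} needs --- is the successor version of the claim: if $p_i=a_i\oplus 1$, then $S_{i+1}$, the CSR-successor of $S_i$, is neither $P(C)$ nor $P(C)'$; equivalently, $S_i$ is never the predecessor of a preferred state or of its companion. (This is what lets \textbf{case} $1$ of the algorithm skip the preferred-state test: the successor of such an $S_i$ is never interchanged by a same-weight joining pair.) The paper's argument is a short parity count built from the same ingredients you assembled, but aimed at $S_{i+1}$ rather than at $S_i$: the hypothesis gives $a_{i+n}=f_C(S_i)=a_i$; if $S_{i+1}=P(C)$, then $a_i=a_{i+n}=1$, so the extended representation based at $S_i$ has weight $wt(P(C))+1=2k+2$, an even number, contradicting Lemma~\ref{Lemma1}; if $S_{i+1}=P(C)'$, then $a_i=a_{i+n}=0$ and $p_i=1$, whence $wt(P(C))=wt(S_{i+1})+1=2k+2$, again contradicting Lemma~\ref{Lemma1}. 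Your two correct observations --- that the hypothesis is equivalent to $f_C(S_i)=a_i$, and that $f_C(P(C))=0$ --- are precisely these ingredients, so the repair is not to strengthen your $r=0$ claim (it cannot be strengthened) but to redirect the whole argument to the successor.
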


\begin{proof}
Note that $a_{n+1}=a_i$ since $p_i=a_i\oplus 1$. Assume $S_{i+1}=P(C)=(0^r1^t0a_s\cdots a_{n-r-t+s-3}1)$, then $a_{i+n}=a_i=1$. We have $E(C)=(10^r1^t0a_s\cdots a_{n-r-t+s-3}1)$ from $S_i=(10^r1^t0a_s\cdots a_{n-r-t+s-3})$. Let $wt(P(C))=W_{E^*}(C)=2k+1$ by Lemma \ref{Lemma1}, then $W_E(C)=2k+2$ brings a contradiction. Assume $S_{i+1}=P(C)^\prime=(0^r1^t0a_s\cdots a_{n-r-t+s-3}0)$, then $S_{i+n}=a_i=0$ and $p_i=1$. Let $wt(S_i)=2k+1$, and thus $W_{E^*}(C)=wt(P(C))=wt(P(C)^\prime)+1=wt(S_{i+1})+1=2k+2$, which goes against Lemma \ref{Lemma1}.
\end{proof}

For $0\le k\le \lfloor {\frac {n}{2}} \rfloor$, there exists a unique run-cycle with extended weight $2k+1$. Without loss of generality, we set it as the initial main cycle. We continuously join one of the rest of cycles with extended weight $2k+1$ into the main cycle in this order: the cycle has the longest run of ones in the rest and has the largest preferred state if there are more than one cycles having the same longest run of ones. Lemma \ref{Lemma2} guarantees that the two cycles are adjacent and can be joined together.

Be Lemma \ref{Lemma3} cycle-joining method can be applied to $MC_k$ and $MC_{k-1}$ ($1\le k\le \lfloor {\frac {n}{2}} \rfloor$) if we choose such a satisfactory state $U_{2k+1}$ on $MC_k$. In other words, it is feasible to join all $MC_k$ ($0\le k\le \lfloor {\frac {n}{2}} \rfloor$) into the longest one by choosing proper $U_{2k+1}$ for each $1\le k\le \lfloor {\frac {n}{2}} \rfloor$.

\begin{example}\label{Example1}
By Lemma {\rm \ref{Lemma02}}, {\rm CSR}$_7$ consists of $16$ pure cycles with length $8$, including one cycle with extended weight $1$, seven cycles with extended weight $3$, seven cycles with extended weight $5$ and one cycle with extended weight $7$. To be concise, we adopt decimal numbers from $1$ to $128$ to represent states in order from $(0,0,0,0,0,0,0)$ to $(1,1,1,1,1,1,1)$. Therefore, $MC_0$ consists of all cycles with extended weight $1$, i.e. $MC_0=\{1,2,3,5,9,17,33,65\}$. By applying cycle-joining method to all cycles with extended weight $3$ with these state pairs $(P(C),P(C)^{\prime})$ in order: $(98,97)$, $(50,49)$, $(26,25)$, $(14,13)$, $(82,81)$, $(74,73)$, we have $MC_1=$

\begin{center}
\begin{tabular}{  p{16cm}}
\textnormal{\{4, 8, 15, 29, 57, 113, 98, 67, 16, 12, 23, 45, 89, 50, 99, 69, 10, 20, 39, 77, 26, 51, 101, 74, 19, 38, 75, 21, 42, 83, 37, 73, 18, 36, 71, 14, 27, 53, 105, 82, 35, 70, 11, 22, 43, 85, 41, 81, 34, 68, 7, 13, 25, 49, 97, 66\}};
\end{tabular}
\end{center}

Similarly, by using these states pairs $(P(C),P(C)^{\prime})$ in order: $(122,121)$, $(62,61)$, $(116,115)$, $(118,117)$, $(60,59)$, $(110,109)$, we obtain $MC_2=$

\begin{center}
\begin{tabular}{  p{16cm}}
\textnormal{\{16, 32, 63, 125, 122, 116, 103, 78, 28, 56, 111, 93, 58, 115, 102, 76, 24, 48, 95, 62, 123, 118, 107, 86, 44, 88, 47, 94, 60, 119, 110, 91, 54, 108, 87, 46, 92, 55, 109, 90, 52, 104, 79, 30, 59, 117, 106, 84, 40, 80, 31, 61, 121, 114, 100, 72\}};
\end{tabular}
\end{center}
Naturally, $MC_3=\{64,128,127,126,124,120,112,96\}$. By choosing $(U^7,{U^{7}}^{\prime})=(126,125)$, $(U^5,{U^{5}}^{\prime})=(114,113)$ and $(U^3,{U^{3}}^{\prime})=(66,65)$, it is viable to join $MC_2$, $MC_1$ and $MC_0$ into $MC_3$ in order and the longest cycle shall be reached as
\begin{center}
\begin{tabular}{  p{16cm}}
\textnormal{\{64, 128, 127, 125, 122, 116, 103, 78, 28, 56, 111, 93, 58, 115, 102, 76, 24, 48, 95, 62, 123, 118, 107, 86, 44, 88, 47, 94, 60, 119, 110, 91, 54, 108, 87, 46, 92, 55, 109, 90, 52, 104, 79, 30, 59, 117, 106, 84, 40, 80, 31, 61, 121, 113, 98, 67, 16, 12, 23, 45, 89, 50, 99, 69, 10, 20, 39, 77, 26, 51, 101, 74, 19, 38, 75, 21, 42, 83, 37, 73, 18, 36, 71, 14, 27, 53, 105, 82, 35, 70, 11, 22, 43, 85, 41, 81, 34, 68, 7, 13, 25, 49, 97, 65, 1, 2, 3, 5, 9, 17, 33, 66, 4, 8, 15, 29, 57, 114, 100, 72, 16, 32, 63, 126, 124, 120, 112, 96\}};
\end{tabular}
\end{center}
\end{example}

Now we present an algorithm to generate de Bruijn cycles from CSR$_n$.
\begin{algorithm}
        \caption{De Bruijn sequences from CSR}
        \label{Algorithm1}
        \begin{algorithmic}[0] 
            \Require $U_{2k+1}$ (for each $1\le k\le \lfloor {n/2} \rfloor$), $S_1=(a_1,a_2,\cdots,a_n)$, $p_1$ (the parity of $S_1$)
            \Ensure an n-order de Bruijn sequence with $S_1$ as the initial state
            
            \State $i\gets 1$
            \Repeat \State{            
            \Switch{$p_i\oplus a_i$}           
            \Case{$1$}
            \If{$(a_{i+1}, \cdots, a_{i+n-1}, 1)=U_{w_i-a_i+2}$}
            \State $(a_{i+n}, p_{i+1}, w_{i+1})\gets$ \Call{NextBitIntCha}{$a_i, p_i, w_i$}
            \Else
            \State $(a_{i+n}, p_{i+1}, w_{i+1})\gets$ \Call{NextBitStab}{$a_i, p_i, w_i$}     
            \EndIf
            \State $break$
            \EndCase            
            \Case{$0$}
            \If{$S_i^*=[a_{i+1} \cdots a_{i+n-1}10]$is a run-cycle}
            \State $(a_{i+n}, p_{i+1}, w_{i+1})\gets$ \Call{NextBitStab}{$a_i, p_i, w_i$}
            \State $break$
            \EndIf
            \State find the preferred state $P(C)$ of $C=S_i^*$
            \If{$P(C)=[a_{i+1} \cdots a_{i+n-1}1]$}
            \State $(a_{i+n}, p_{i+1}, w_{i+1})\gets$ \Call{NextBitIntCha}{$a_i, p_i, w_i$}
            \Else
            \State $(a_{i+n}, p_{i+1}, w_{i+1})\gets$ \Call{NextBitStab}{$a_i, p_i, w_i$}
            \EndIf
            \State $break$
            \EndCase
            \EndSwitch
            \State $i\gets i+1$
            \State $S_i\gets (a_i, a_{i+1},\cdots, a_{i+n-1})$
            } \Until{$S_i=S_1$}        
            
            \Function{NextBitStab}{$a_i, p_i, w_i$}
                \State $a_{i+n}\gets p_i\oplus 1$
                \State $p_{i+1}\gets a_i\oplus 1$
                \State $w_{i+1}\gets w_i-a_i+(p_i\oplus 1)$
                \State \Return{$(a_{i+n}, p_{i+1}, w_{i+1})$}
             \EndFunction
             
              \Function{NextBitIntCha}{$a_i, p_i, w_i$}
                \State $a_{i+n}\gets p_i$
                \State $p_{i+1}\gets a_i$
                \State$w_{i+1}\gets w_i-a_i+p_i$
                \State \Return{$(a_{i+n}, p_{i+1}, w_{i+1})$}
             \EndFunction
               
        \end{algorithmic}
\end{algorithm}

We provide explanations about the algorithm.

The \textbf{switch} statement examines the value of $p_i\oplus a_i$. By Lemma \ref{Lemma4}, we know that if $p_i\oplus a_i=1$ holds, the state $S_i$ is likely to be the predecessor of $U_{w_i-a_i+2}$ or that of $U_{w_i-a_i+2}^{\prime}$. We go to the \textbf{if} statement in \textbf{case} 1 for a judgement. Otherwise, $S_i$ can be neither the predecessor of $U_{w_i-a_i+2}$ nor be that of $U_{w_i-a_i+2}^{\prime}$ by Lemma \ref{Lemma4} when $p_i\oplus a_i=0$ holds.

The \textbf{if} statement in \textbf{case} 1 decides whether the state $S_i$ is the predecessor of $U_{w_i-a_i+2}$ or that of $U_{w_i-a_i+2}^{\prime}$. If $S_i$ is the predecessor of either $U_{w_i-a_i+2}$ or $U_{w_i-a_i+2}^{\prime}$, $U_{w_i-a_i+2}$ and $U_{w_i-a_i+2}^{\prime}$ would have their predecessors interchange during the joining of $MC_{w_i-a_i+2}$ and $MC_{w_i-a_i}$, which means the states $S_i$ and $\hat{S_i}$ would have their successors interchange. By Lemma \ref{Lemma5}, $S_i$ is impossible to be the preferred state of cycle $C$ $P(C)$ or its companion $P(C)^{\prime}$ when the state $S_i$ is neither the predecessor of $U_{w_i-b_i+2}$ nor that of $U_{w_i-b_i+2}^{\prime}$. Namely, the successor of $S_i$ would not be changed in the generation of de Bruijn cycles.

The first \textbf{if} statement in \textbf{case} 0 decides whether $S_i^*=[a_{i+1}\cdots a_{i+n-1}10]$ is a run-cycle. In fact, it need a check of whether $S_i^*$ contains only one cyclic run of ones. Noticed that there is a precondition of \textbf{case} 0. That is $p_i\oplus a_i=0$. If $S_i^*=[a_{i+1}\cdots a_{i+n-1}10]$ is a run-cycle, $[a_{i+1}a_{i+2}\cdots a_{i+n-1}]$ must be something like this:
$$[00\cdots 0\overbrace{11\cdots 1}^{2k}],$$
where $2k$ is decided by $p_i\oplus a_i=0$. Assume that $(a_{i+1},a_{i+2},\cdots,a_{i+n-1},1)=P(C)=(0^r1^t0a_1\cdots a_{n-t-r-2}1)$, we quickly have the following contradiction:
$$E^*(C)=\underbrace{(0^r1^t0a_1\cdots a_{n-t-r-2}1)}_{at\ least\ two\ runs\ of\ ones}=\underbrace{(a_{i+1},a_{i+2},\cdots, a_{i+n-1},1,0)}_{only\ a\ run\ of\ ones}.$$
Hence, $(a_{i+1},a_{i+2},\cdots,a_{i+n-1},1)$ can never be $P(C)$ or its companion $P(C)^{\prime}$ when $S_i^*=[a_{i+1}\cdots a_{i+n-1}10]$ is a run-cycle. That is to say that in the generation of $MC_{\frac{w_i+a_{i+1}-1}{2}}$ the successor of $S_i$ would stay.

The first \textbf{if} statement in \textbf{case} 0 does some judgements on the preferred state $P(C)$ of a non-run-cycle $C$. Noticed that when $S_i^*$ has at least two runs of ones, i.e. $S_i^*$ is a non-run-cycle, there is always some satisfactory $E_i^*$. At this moment, if $E_i^*=S_i^*$, i.e. $[0^r1^t0a_s\cdots a_{n-r-t+s-3}10]=[a_{i+1} \cdots a_{i+n-1}10]$, we have $P(C)=[0^r1^t0a_s\cdots a_{n-r-t+s-3}1]=(a_{i+1}, a_{i+2}, \cdots, a_{i+n-1}, 1)$ and then the state $S_{i+1}$ must be either $P(C)$ or its companion $P(C)^{\prime}$. Thus, $S_i$ is the predecessor of either $P(C)$ or $P(C)^{\prime}$, which implies that the successor of $S_i$ would be modified in the generation of $MC_{\frac{w_i+a_{i+1}-1}{2}}$. 

The function \textbf{NextBitStab} is to generate next bit for those conditions that the successor of $S_i$ would not be interchanged. The function \textbf{NextBitIntCha} is for opposite conditions.

\begin{example}
We program Algorithm {\rm \ref{Algorithm1}} on {\rm MATLAB}. We set an initial state $(0,1,1,1,1,1,1)$ for {\rm CSR}$_7$ and use the same states of $U_{2k+1}$ as above, i.e. $U_3=(1,0,0,0,0,0,1)$, $U_5=(1,1,1,0,0,0,1)$, $U_7=(1,1,1,1,1,0,1)$, then we have the following binary de Bruijn cycle
\begin{center}
\begin{tabular}{  p{16cm}}
\textnormal{1 1 0 0 1 1 0 1 1 1 0 0 1 0 1 1 1 1 0 1 0 1 0 1 1 1 0 1 1 0 1 0 1 1 0 1 1 0 0 1 1 1 0 1 0 0 1 1 1 1 0 0 0 0 1 0 1 1 0 0 0 1 0 0 1 1 0 0 1 0 0 1 0 1 0 0 1 0 0 0 1 1 0 1 0 0 0 1 0 1 0 1 0 0 0 0 1 1 0 0 0 0 0 0 0 1 0 0 0 0 0 1 1 1 0 0 0 1 1 1 1 1 0 1 1 1 1 1};
\end{tabular}
\end{center}
Apparently, it is shift equivalent to the result in Example {\rm \ref{Example1}}.
\end{example}

For each $1\le k\le \lfloor {\frac {n}{2}} \rfloor$ there are manifestly $\binom{n-1}{2k-1}$ choices of $U_{2k+1}$, which leads to possibly the same number of de Bruijn cycles. The most costly of working space is the storage for $U_{2k+1}$, around $n\lfloor {\frac {n}{2}} \rfloor+n \approx {\frac {n^2}{2}}$. Besides, $n$ cyclic shifts at most and the same number of $n$-bit comparisons are needed to generate next bit.

\section{FSRs whose periods divide (n+1)}
Algorithm \ref{Algorithm1} lies on the fact that extended weight of any cycle keeps constant.

\begin{lemma}\label{Lemma6}
A necessary and sufficient condition for the extended weight of a given cycle produced by an $n$-stage {\rm FSR} to be constant is that period of the cycle divides $(n+1)$.
\end{lemma}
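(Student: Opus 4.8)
The plan is to reduce the statement ``the extended weight is constant along the cycle'' to a condition on the bits of the cycle's output sequence, and then to read off that condition as the divisibility $l \mid (n+1)$, where $l$ is the cycle length.

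First I would fix a cycle $C$ of length $l$ with states $S_i = (a_i, a_{i+1}, \ldots, a_{i+n-1})$, and observe that the extended representation attached to the state $S_i$ is obtained by appending the feedback bit: $E(S_i) = (a_i, a_{i+1}, \ldots, a_{i+n-1}, a_{i+n})$, where $a_{i+n} = f(S_i)$ is exactly the bit shifted in to produce $S_{i+1}$. Thus $W_E(S_i)$ equals the Hamming weight of the length-$(n+1)$ window $(a_i, \ldots, a_{i+n})$ of the periodic output sequence of $C$.

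Next I would compare two consecutive windows. Since $E(S_{i+1}) = (a_{i+1}, \ldots, a_{i+n+1})$ shares $n$ entries with $E(S_i)$, the weights satisfy
$$W_E(S_{i+1}) - W_E(S_i) = a_{i+n+1} - a_i .$$
Hence the extended weight is constant over the entire cycle if and only if $a_{i+n+1} = a_i$ for every $i$; that is, if and only if $(n+1)$ is a period of the output sequence of $C$.

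The final step is purely number-theoretic. The output sequence of $C$ has least period exactly $l$: the states $S_0, \ldots, S_{l-1}$ are distinct by the definition of a cycle, so a shift by $p$ leaves the sequence invariant precisely when $S_p = S_0$, which occurs if and only if $l \mid p$. Taking $p = n+1$ shows that ``$(n+1)$ is a period'' is equivalent to $l \mid (n+1)$, and chaining the equivalences proves both directions of the lemma simultaneously. I do not anticipate a serious obstacle, as the core of the argument is the one-line telescoping identity for consecutive window weights. The only point deserving care is the characterization of the periods of the output sequence---namely that a shift fixes the sequence exactly when it returns $S_0$ to itself, which rests on the distinctness of the states on a cycle. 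I would also make explicit that ``period of the cycle'' here means the cycle length $l$, so that $l \mid (n+1)$ is precisely the divisibility asserted.
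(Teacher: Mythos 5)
Your proof is correct and follows essentially the same route as the paper: both compare the weights of two consecutive extended representations, which overlap in $n$ positions, to deduce that constancy of the extended weight is equivalent to $a_i = a_{i+n+1}$ for all $i$, and then translate this into $l \mid (n+1)$. Your version is in fact slightly more careful than the paper's at the final step, since you justify why a shift by $n+1$ fixing the output sequence forces $l \mid (n+1)$ (via distinctness of the states on the cycle), a point the paper asserts without comment.
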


\begin{proof}
Let a cycle $C=(S_1, S_2, \cdots, S_l)=(a_1a_2\cdots a_l)$ with length $l$ and $S_i=(a_i, a_{i+1}, \cdots, a_{i+n-1})$ for any $i$, $1\le i \le l$. Here all subscripts must be reduced $mod$ $l$. Thus two extended representations of $C$ are given by $E(C)=(a_i, a_{i+1}, \cdots, a_{i+n})$ and $E(C)^\prime=(a_{i+1}, a_{i+2}, \cdots, a_{i+n+1})$. For the necessity: Since $W_E(C)=wt(a_i, a_{i+1}, \cdots, a_{i+n})$, $W_E(C)^\prime=wt(a_{i+1}, a_{i+2}, \cdots, a_{i+n+1})$ and $W_E(C)=W_E(C)^\prime$, we have $a_i=a_{i+n+1}$, then $l\ |\ (n+1)$. Now for the sufficiency: Since $l\ |\ (n+1)$, $a_i=a_{i+n+1}$ and $wt(a_i, a_{i+1}, \cdots, a_{i+n})=wt(a_{i+1}, a_{i+2}, \cdots, a_{i+n+1})$ ($1\le i \le l$), $W_E(C)$ is constant on cycle $C$.
\end{proof}

Lemma \ref{Lemma6} implies that FSRs the idea of extended representation can be generalized to should be within the class of $n$-stage FSRs (denoted by $\varOmega$) whose periods of all cycles are factors of $(n+1)$.

 \begin{theorem}\label{Theorem1}
 Let $f=x_1+g(x_2, x_3, \cdots, x_n)$ be in $\varOmega$, we have

 (1) $g(x_2, x_3, \cdots, x_n)$ is symmetric.

 (2) Either $g_k=0$ for odd $k$ and $g_k=1$ for even $k$ or $g_k=1$ for odd $k$ and $g_k=0$ for even $k$.
 \end{theorem}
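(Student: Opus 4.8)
The plan is to show that membership in $\varOmega$ forces $g$ to be one of the two functions $x_2\oplus\cdots\oplus x_n$ (the PSR case) or $1\oplus x_2\oplus\cdots\oplus x_n$ (the CSR case), from which (1) and (2) are both immediate. First I would convert the global hypothesis ``every cycle has period dividing $n+1$'' into a pointwise functional equation for $g$. Since $f=x_1+g$ is nonsingular, $\rho$ permutes $F_2^n$ and the state graph splits into disjoint cycles; by definition of $\varOmega$ (cf.\ Lemma~\ref{Lemma6}) we have $f\in\varOmega$ iff every cycle has period dividing $n+1$, i.e.\ $\rho^{n+1}=\mathrm{id}$, so that the sequence $(a_i)$ produced from \emph{any} initial state satisfies $a_{i+n+1}=a_i$ for all $i$. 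Because the $i$-th instance of this identity for the initial state $S_1$ is exactly the $i=1$ instance for the (admissible) initial state $S_i$, the whole class $\varOmega$ is captured by the single return identity
\[
f(a_2,a_3,\ldots,a_n,f(a_1,\ldots,a_n))=a_1 \qquad \text{for all }(a_1,\ldots,a_n)\in F_2^n .
\]
Writing $a_{n+1}=f(a_1,\ldots,a_n)=a_1\oplus g(a_2,\ldots,a_n)$ and substituting $f=x_1+g$, this reads $g(a_3,\ldots,a_n,a_{n+1})=a_1\oplus a_2$.

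The key step is to upgrade this self-referential identity (the appended bit $a_{n+1}$ itself contains $g$) into an explicit linear recurrence. I would fix $a_2,\ldots,a_n$ and let $a_1$ range over $\{0,1\}$: then the last argument $a_{n+1}=a_1\oplus g(a_2,\ldots,a_n)$ runs over both $0$ and $1$ while the target $a_1\oplus a_2$ flips. Hence $g(a_3,\ldots,a_n,z)$ is affine in $z$ and equals $a_2$ at $z=g(a_2,\ldots,a_n)$, so $g(a_3,\ldots,a_n,z)=a_2\oplus z\oplus g(a_2,\ldots,a_n)$. Renaming the arguments as $c_1=a_2,\ldots,c_{n-1}=a_n,\ c_n=z$ gives
\[
g(c_2,c_3,\ldots,c_{n-1},c_n)=c_1\oplus c_n\oplus g(c_1,c_2,\ldots,c_{n-1}) \qquad \text{for all }(c_1,\ldots,c_n)\in F_2^n .
\]

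With this recurrence the remainder is routine. I would introduce $h(c_1,\ldots,c_{n-1})=g(c_1,\ldots,c_{n-1})\oplus(c_1\oplus\cdots\oplus c_{n-1})$ and verify by direct substitution that the recurrence collapses to the shift-invariance $h(c_2,\ldots,c_n)=h(c_1,\ldots,c_{n-1})$. Reading this along an arbitrary binary string $c_1c_2c_3\cdots$, any two consecutive length-$(n-1)$ windows carry the same $h$-value; since any prescribed pair of windows $u,u'$ can be embedded $n-1$ places apart in one common string, a chain of such equalities yields $h(u)=h(u')$. Thus $h\equiv\epsilon$ is constant for some $\epsilon\in F_2$, i.e.\ $g(x_2,\ldots,x_n)=\epsilon\oplus(x_2\oplus\cdots\oplus x_n)$.

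This settles both claims: such a $g$ depends only on $x_2\oplus\cdots\oplus x_n$, hence is invariant under every permutation of its variables, giving (1); and on inputs of weight $k$ one has $x_2\oplus\cdots\oplus x_n=k \bmod 2$, so $g_k=\epsilon\oplus(k\bmod 2)$ is constant on each weight class, with $\epsilon=0$ yielding $g_k=1$ for odd $k$ and $g_k=0$ for even $k$, and $\epsilon=1$ yielding the complementary pattern, which is exactly (2). I expect the main obstacle to be the upgrade step in the second paragraph: one must exploit \emph{both} values of $a_1$ to turn the implicit identity into the clean linear recurrence, and one must first justify carefully that the entire class $\varOmega$ reduces to the single one-step return identity rather than to an unwieldy family of periodicity conditions along each orbit.
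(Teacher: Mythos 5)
Your proof is correct, but it takes a genuinely different route from the paper's. The paper stays inside its cycle-structure framework: it fixes a cycle, uses Lemma~\ref{Lemma6} to make the extended weight constant on it, derives the per-cycle relations $g_{k_1}(i)=g_{k_2}(i)+k_1-k_2 \ (\bmod\ 2)$ among the weight-restricted values of $g$, and then propagates these relations from cycle to cycle through shared conjugate pairs, invoking the externally cited (and nontrivial) fact that the adjacency graph of cycles of a nonsingular FSR is connected; chaining equalities along a path in that graph shows each $g_k$ is globally constant, which gives symmetry, and (2) then follows from the parity relations. You instead translate membership in $\varOmega$ into the single pointwise identity obtained by reading $\rho^{n+1}=\mathrm{id}$ one step ahead, namely
\begin{equation*}
g\bigl(a_3,\ldots,a_n,\,a_1\oplus g(a_2,\ldots,a_n)\bigr)=a_1\oplus a_2 ,
\end{equation*}
upgrade it to the linear recurrence $g(c_2,\ldots,c_n)=c_1\oplus c_n\oplus g(c_1,\ldots,c_{n-1})$ by toggling $a_1$ (the decisive observation that the inner argument then sweeps both values), and note that $h=g\oplus(c_1\oplus\cdots\oplus c_{n-1})$ is shift-invariant, hence constant by a concatenation-and-chain argument. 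Your reduction of $\varOmega$ to the one-step return identity is sound, since the identity applied to the shifted states of any trajectory recovers full periodicity. The trade-off is clear: the paper's argument reuses the machinery it develops (extended weights, cycle adjacency, the cycle-joining graph), which is the theme of that section, but it establishes only (1) and (2) and then needs the simplified value/ANF vector correspondence of Canteaut--Videau to reach the Corollary; your argument is more elementary and self-contained, requires neither Lemma~\ref{Lemma6} nor the connectivity theorem, and delivers the explicit form $g=\epsilon\oplus x_2\oplus\cdots\oplus x_n$ in one pass, so the Corollary (that $f$ must be the PSR or CSR feedback function) comes for free rather than as a separate deduction.
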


 \begin{proof}
 For any cycle $C$ with length $l$ in $G_{f(n)}$, without the loss of generality, let $C=(S_1, S_2, \cdots, S_l)=(a_1a_2\cdots a_l)$. Since $l\ |\ (n+1)$, we have $$S_i=(a_i, a_{i+1}, \cdots, a_{l-1}, a_0, a_1, \cdots, a_{l-1}, \cdots, a_0, a_1, \cdots, a_{i-2})$$ for any $i \in \{ 0, 1, \cdots, l-1\}$.
By Lemma \ref{Lemma6}, let $W_E(C)=k_0$ with $0\le k_0\le(n+1)$. We then denote $g_k(x_1, x_2, \cdots, x_{n-1})$ as $g_k(m)$ $(0 \le k \le k_0)$ where $m=\sum\limits_{j=1}^{n-1}x_j2^{n-1-j}$ and $0\le m\le (2^{n-1}-1)$. For any $S_i$, it follows that $a_{i-1}=a_i\oplus g_{k_0-a_{i-1}-a_i}(m_i)$.

Case 1: $a_{i-1}+a_i=0$, $(a_{i-1}, a_i)=(0, 0)$, then $g_{k_0}(m_i)=0$;

Case 2: $a_{i-1}+a_i=1$, $(a_{i-1}, a_i)=(0, 1)$ or $(1, 0)$, then $g_{k_0-1}(m_i)=1$;

Case 3: $a_{i-1}+a_i=2$, $(a_{i-1}, a_i)=(1, 1)$, then $g_{k_0-2}(m_i)=0$.

Obviously (if $g_{k_0}$, $g_{k_0-1}$ or $g_{k_0-2}$ exists on $C$) $g_{k_0}$, $g_{k_0-1}$ and $g_{k_0-2}$ are all constant functions on $C$. Thus we denote $g_k$ on cycle $C_i$ as $g_k(i)$ in further proofs. It is also inferred that $g_{k_1}(i)=g_{k_2}(i)\oplus 1$ for odd $(k_1-k_2)$ and $g_{k_1}(i)=g_{k_2}(i)$ for even $(k_1-k_2)$ for any $g_{k_1}$ and $g_{k_2}$ on cycle $C_i$. (Note that $k_1-k_2 \in \{ -2, -1, 0, 1, 2\}$.) Equivalently, the relationship between $g_{k_1}(i)$ and $g_{k_2}(i)$ could be denoted by
\begin{equation}\label{Equation3}
g_{k_1}(i)=g_{k_2}(i)+k_1-k_2 \ (\bmod 2).
\end{equation}

Let a graph $\Delta$ be constructed as follows. Each cycle $C_i$ in $G_{f(n)}$ is represented by a vertex $V_i$ in the graph and an edge is drawn between two vertices if and only if the two corresponding cycles are adjacent. It has been proved that the graph for any nonsingular feedback function is connected. Also we remind readers that for any two adjacent cycles $C_i$ and $C_j$, let $S=(a_1, a_2, \cdots, a_n)$ on $C_i$ and $\hat{S}=(a_1\oplus 1, a_2, \cdots, a_n)$ on $C_j$ with $wt(a_2, a_3, \cdots, a_n)=k$, it follows that $g_k(i)=g_k(j)=g(a_2, a_3, \cdots, a_n)$. Therefore we denote the corresponding edge between $V_i$ and $V_j$ as $E_{i, j}(k)$. From the connectivity of the graph, we are sure to find a path for corresponding vertices of any two nonadjacent cycles $C_i$ and $C_j$ with the existence of both $g_k(i)$ and $g_k(j)$. Assume the path is represented by a set of vertices in order as $\{V_i=V_{i_1}, V_{i_2}, \cdots, V_{i_l}=V_j\}$. Let the edge between $V_{i_s}$ and $V_{i_{s+1}}$ be $E_{i_s,i_{s+1}}(k_{i_s})$ for $1\le s\le (l-1)$. We provide a schematic diagram of $\Delta$ with the neglect of the slight differences in definitions of cycle $C$ and vertex $V$.

\begin{figure}[!h]
  \centering
  \includegraphics[scale=.4]{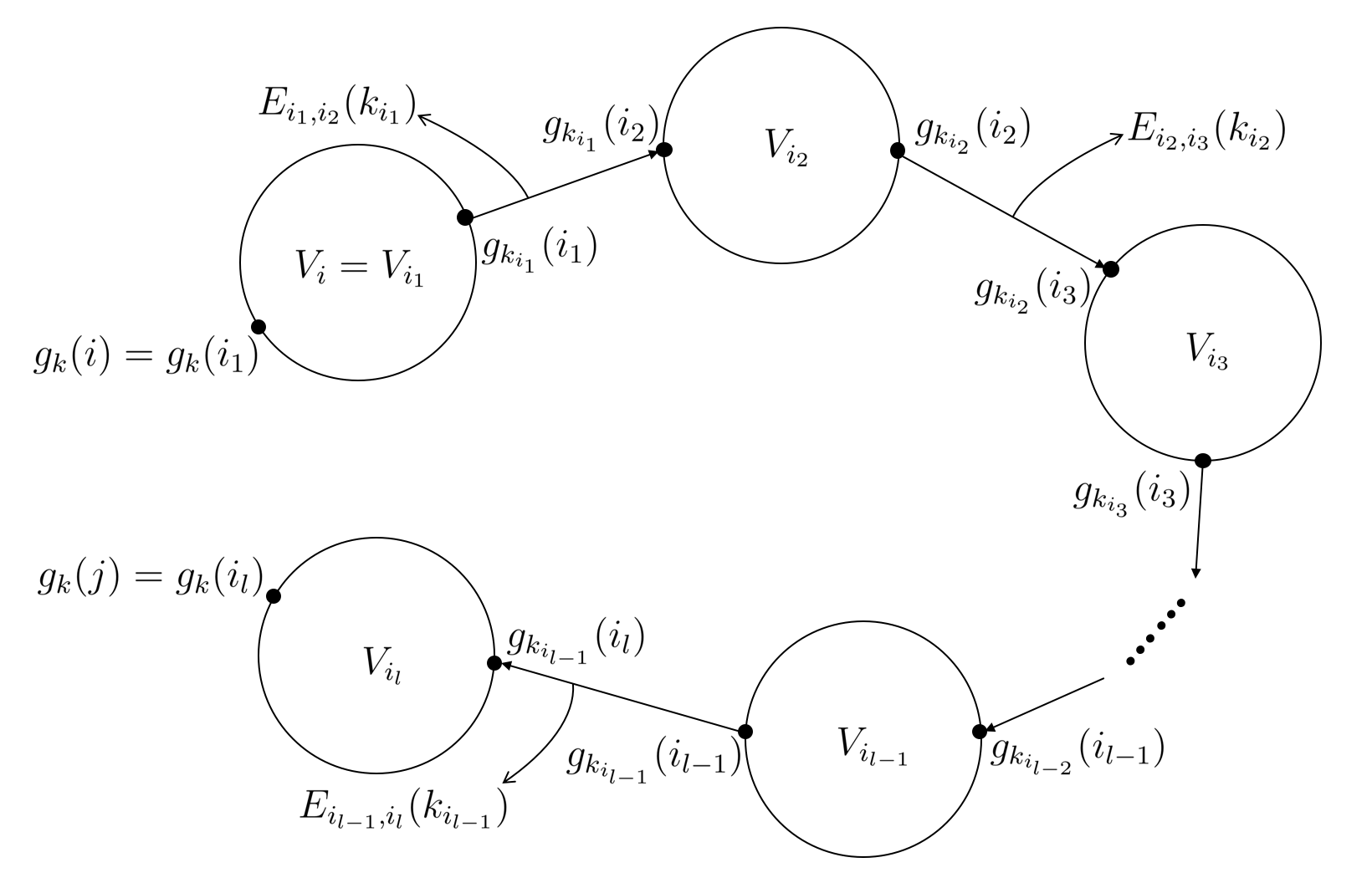}\\
\caption{$\Delta$}
\label{Delta}
\end{figure}

We obtain the relation between $g_k(i)$ and $g_k(j)$ by the equalities of $g_k$ mentioned above:
\begin{eqnarray*}
  C_{i_1}:~g_k(i_1) &=& g_{k_{i_1}}(i_1)+k-k_{i_1} \ (\bmod 2), \\
  E_{i_1,i_2}(k_{i_1}):~g_{k_{i_1}}(i_1)&=&g_{k_{i_1}}(i_2),\\
C_{i_2}:~g_{k_{i_1}}(i_2)&=&g_{k_{i_2}}(i_2)+k_{i_1}-k_{i_2} \ (\bmod 2),\\
E_{i_2,i_3}(k_{i_2}):~g_{k_{i_2}}(i_2)&=&g_{k_{i_2}}(i_3),\\
\vdots~~~~~~~~~&\vdots~&~~~~~~~~~~~~~~~~~~~\vdots\\
C_{i_{l-1}}:~g_{k_{i_{l-2}}}(i_{l-1})&=&g_{k_{i_{l-1}}}(i_{l-1})+k_{i_{l-2}}-k_{i_{l-1}} \ (\bmod 2),\\
E_{i_{l-1},i_l}(k_{i_{l-1}}):~g_{k_{i_{l-1}}}(i_{l-1})&=&g_{k_{i_{l-1}}}(i_l),\\
C_{i_l}:~g_{k_{i_{l-1}}}(i_l)&=&g_k(i_l)+k_{i_{l-1}}-k \ (\bmod 2).
\end{eqnarray*}

Cumulating the $l$ equations, we observe $g_k(i)=g_k(i_1)=g_k(i_l)=g_k(j)$, which implies that $g_k$ $(0\le k\le n-1)$ is a constant function in $G_{f(n)}$. Thus the output of $g$ depends on the Hamming weight of inputs only. The first assertion is proved.

The second assertion then follows directly.
 \end{proof}

Given two integers $a$ and $b$ and their 2-adic representations $a=\sum\limits_{i=1}^{n}a_i2^{n-i}$ and $b=\sum\limits_{i=1}^{n}b_i2^{n-i}$ we say $a\preceq b$ or equivalently $(a_1, a_2,\cdots, a_{n})\preceq (b_1, b_2,\cdots, b_{n})$ if and only if $a_i\le b_i$ for any $i\in \{1, 2, \cdots, n\}$. It is well known that the simplified value vector $v(f)$ and the simplified ANF vector $\lambda_f$ of a symmetric $n$-variable function $f$ are related by $\lambda_f(i)=\mathop{\oplus}\limits_{k\preceq i}v_f(k)$ for any $i\in \{0,1, \cdots, n\}$ in \cite{e}. Then the second assertion of Theorem \ref{Theorem1} leads to this conclusion.

\begin{corollary}
For any n-variable $f$ in $\varOmega$, we have either $v(f)=(1, 0, 1, 0, \cdots)$, $\lambda_f=(1, 1, 0, 0, \cdots, 0)$ or $v(f)=(0, 1, 0, 1, \cdots)$, $\lambda_f=(0, 1, 0, 0, \cdots, 0)$. Equally we have either $f=f_P=x_1\oplus x_2\oplus \cdots \oplus x_n$ or $f=f_C=x_1\oplus x_2\oplus \cdots \oplus x_n\oplus 1$.
\end{corollary}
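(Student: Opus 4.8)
The plan is to read the two admissible feedback functions directly off the second assertion of Theorem~\ref{Theorem1} and then re-express them in terms of the simplified value vector $v(f)$ and the simplified ANF vector $\lambda_f$. Because $f=x_1+g(x_2,\ldots,x_n)$ with $g$ symmetric by Theorem~\ref{Theorem1}(1), each restriction $g_k$ is a constant equal to $v_g(k)$, and Theorem~\ref{Theorem1}(2) states that this constant follows one of two alternating patterns. Concretely, either $g_k=1$ precisely when $k$ is even, whence $g=1\oplus x_2\oplus\cdots\oplus x_n$ and $f=1\oplus x_1\oplus\cdots\oplus x_n=f_C$, or $g_k=1$ precisely when $k$ is odd, whence $g=x_2\oplus\cdots\oplus x_n$ and $f=x_1\oplus\cdots\oplus x_n=f_P$. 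This already settles the final sentence of the statement, so it remains to attach the matching $v(f)$ and $\lambda_f$ to each case.

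First I would confirm that $f$ is a genuinely symmetric function of all $n$ variables, so that $v(f)$ and $\lambda_f$ are even defined. Fixing a total weight $w$ and splitting on $x_1$, the value of $f$ is $g_w$ when $x_1=0$ and $1\oplus g_{w-1}$ when $x_1=1$; the alternation supplied by Theorem~\ref{Theorem1}(2) gives $g_w\oplus g_{w-1}=1$, so the two branches coincide and $v_f(w)=g_w$ depends only on $w$. Substituting the two cases then yields $v(f)=(1,0,1,0,\ldots)$ for $f_C$ and $v(f)=(0,1,0,1,\ldots)$ for $f_P$.

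Finally I would pass from $v(f)$ to $\lambda_f$ via the relation $\lambda_f(i)=\bigoplus_{k\preceq i}v_f(k)$ recalled just before the statement. In the case $v(f)=(1,0,1,0,\ldots)$ one checks $\lambda_f(0)=\lambda_f(1)=1$ and $\lambda_f(i)=0$ for $i\ge 2$, so $\lambda_f=(1,1,0,\ldots,0)$; in the case $v(f)=(0,1,0,1,\ldots)$ one gets $\lambda_f=(0,1,0,\ldots,0)$. These are exactly the ANF vectors of $X_{0,n}\oplus X_{1,n}=f_C$ and $X_{1,n}=f_P$, which cross-checks the identification made in the first step. I expect the only delicate point to be this last conversion: evaluating $\bigoplus_{k\preceq i}v_f(k)$ for each $i$ requires the binary-domination order $\preceq$, and one must verify that the alternating pattern of $v_f$ collapses to support $\{0,1\}$ for $f_C$ and support $\{1\}$ for $f_P$. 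This is bookkeeping rather than a conceptual hurdle, since all the substantive work is already carried by Theorem~\ref{Theorem1}.
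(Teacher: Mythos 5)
Your proposal is correct and follows essentially the same route as the paper, which simply invokes the second assertion of Theorem~\ref{Theorem1} together with the relation $\lambda_f(i)=\mathop{\oplus}\limits_{k\preceq i}v_f(k)$ to conclude. In fact your write-up is more detailed than the paper's one-sentence argument: you explicitly verify that the alternation $g_w\oplus g_{w-1}=1$ makes $f$ symmetric, identify $f_P$ and $f_C$ from the value vectors, and carry out the conversion to $\lambda_f$, all of which the paper leaves implicit.
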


Clearly, $\varOmega$ consists of PSR and CSR. It can be seen that generalizations of extended representation $E(C)$ and extended weight $W_E(C)$ in algorithms to generate de Bruijn sequences from FSRs is very limited.


\bibliography{mybibfile}

\end{document}